\numberwithin{equation}{section}
\theoremstyle{plain}
\newtheorem{theorem}{Theorem}[section]
\newtheorem{lemma}[theorem]{Lemma}
\newtheorem{corollary}[theorem]{Corollary}
\newtheorem{proposition}[theorem]{Proposition}
\newtheorem{conjecture}[theorem]{Conjecture}
\theoremstyle{definition}
\newtheorem{remark}[theorem]{Remark}
\def\0{{\bf 0}}
\DeclareMathOperator{\depth}{depth}
\DeclareMathOperator{\fin}{end}
\DeclareMathOperator{\reg}{reg}
\DeclareMathOperator{\tor}{Tor}
\DeclareMathOperator{\Tor}{Tor}
\DeclareMathOperator{\Tot}{Tot}
\DeclareMathOperator{\cha}{char}
\begin{document}

\title{Degree Bounds on Homology and a Conjecture of Derksen}

\author{Marc Chardin}
\address{Institut de Math\'ematiques de Jussieu\\
4, place Jussieu\\
F-75005 Paris\\
France}
\email{chardin@math.jussieu.fr}

\author{Peter Symonds}
\address{School of Mathematics\\
         University of Manchester\\
     Manchester M13 9PL\\
     United Kingdom}
\email{Peter.Symonds@manchester.ac.uk}

\subjclass[2010]{Primary: 13D02 ; 13A50}

\thanks{Second author partially supported by a grant from the Leverhulme Trust.}

\begin{abstract}
Harm Derksen made a conjecture concerning degree bounds for the syzygies of rings of polynomial invariants in the non-modular case \cite{derksen}. We provide counterexamples to this conjecture, but also prove a slightly weakened version.

We also prove some general results that give degree bounds on the homology of complexes and of $\Tor$ groups.
\end{abstract}

\maketitle

\section{Introduction}

Let $G$ be a finite group and $V$ a finite dimensional representation of $G$ over a field $k$. Let $B=k[V]$, graded with $V$ in degree 1, and let $R=B^G$. In characteristic 0, it was shown by Noether that $R$ is generated in degrees at most $|G|$. This is also true whenever $\cha k$ does not divide $|G|$, as was shown more recently by Fleischmann \cite{fleischmann}, Fogarty \cite{fogarty} and Derksen and Sidman \cite{ds}. We refer to this as the non-modular case.

Let $I$ be the ideal in $B$ generated by $R_+$ and let $\tau_G(V)$ be the smallest positive degree $i$ in which $I_i=B_i$, in other words $\tau_G(V)= \fin (B/I) +1$. The proofs mentioned above also show that $\tau_G(V) \leq |G|$ in the non-modular case. In fact, it was shown by Broer \cite[Lemma 6]{broer} that this inequality holds in general provided we assume that the inclusion of $R$ in $B$ is split as a map of $R$-modules.

Let $f_1, \cdots ,f_r$ be a minimal set of generators for $R$ and let $S = k[x_1, \ldots , x_r]$ be a polynomial ring with  $\deg x_i = \deg f_i$. There is a natural surjection $S \twoheadrightarrow R$ given by  $x_i \mapsto f_i$. For any $S$-module $M$, let $t^S_i(M)= \fin \Tor ^S_i(M,k)$; this is equal to the largest degree of a basis element of the $i$th term in the minimal free resolution of $M$ over $S$.

\begin{conjecture}[Derksen \cite{derksen}]
In the non-modular case, $t^S_i(R) \leq (i+1) \tau_G(V) \leq (i+1)|G|$.
\end{conjecture}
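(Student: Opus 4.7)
The plan is to reduce the problem to a question about Koszul homology on the polynomial ring $B$, exploiting the non-modular assumption $\cha k \nmid |G|$. The Reynolds operator $\rho\colon B\to R$ splits the inclusion $R\hookrightarrow B$ as $R$-modules, and pulling this back along $S\twoheadrightarrow R$ yields a decomposition $B = R \oplus W$ as graded $S$-modules. Consequently $\Tor^S_i(R,k)$ is a graded direct summand of $\Tor^S_i(B,k)$, so it suffices to prove the bound $t^S_i(B)\leq (i+1)\tau_G(V)$.

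Since $\x=(x_1,\ldots,x_r)$ is a regular sequence in $S$, the Koszul complex $K_\bullet(\x;S)$ minimally resolves $k$ over $S$, so $K_\bullet(\mathbf f;B) := K_\bullet(\x;S) \otimes_S B$ computes $\Tor^S_\bullet(k,B)$ and $t^S_i(B) = \fin H_i(\mathbf f;B)$. The algebraic inputs one has are that $H_0(\mathbf f;B) = B/(R_+B) = B/I$ has finite length with $\fin(B/I) = \tau_G(V)-1$, and that by Derksen--Sidman every $\deg f_j\leq \tau_G(V)$. The natural way to propagate control upward is via the mapping-cone long exact sequence obtained by adjoining a single generator: writing $\mathbf f' = (f_1,\ldots,\widehat{f_j},\ldots,f_r)$,
\[ \cdots \to H_i(\mathbf f';B)(-\deg f_j) \xrightarrow{\pm f_j} H_i(\mathbf f';B) \to H_i(\mathbf f;B) \to H_{i-1}(\mathbf f';B)(-\deg f_j) \to \cdots, \]
formally yielding $t_i(\mathbf f;B)\leq \max\{t_i(\mathbf f';B),\, t_{i-1}(\mathbf f';B)+\tau_G(V)\}$.

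The main obstacle is that this iteration cannot be anchored in the obvious way: partial Koszul complexes $K_\bullet(\mathbf f';B)$ have $H_0 = B/(\mathbf f')B$ of positive Krull dimension whenever $(\mathbf f')$ is not $\m$-primary, so their ends are infinite and there is no base case for an induction on the number of generators. One must instead exploit the end $\tau_G(V)-1$ of the full ideal $I$ directly, for instance through a spectral sequence relating $\Tor^S_\bullet(k,B)$ to $\Tor^B_\bullet(B/I,k)$ together with a regularity bound on $B/I$ viewed as a $B$-module, or through a direct analysis of Koszul cycles using the Reynolds operator to replace elements of $B$ by invariants of bounded degree. I expect this to be the crux of the difficulty and the likely source of counterexamples: the higher Koszul homology of the specific generating sequence $\mathbf f$ need not be controlled linearly in $i$ with slope precisely $\tau_G(V)$, and the Reynolds summand splitting does not absorb the excess. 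A plausible weakening of the conjecture would replace $(i+1)\tau_G(V)$ by a linear function of $i$ with a slightly larger slope, or with an additive correction involving structural invariants of $R$ such as its $a$-invariant or the number of minimal generators beyond $\dim R$.
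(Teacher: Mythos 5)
The statement you were asked to prove is false, so no proof strategy can be completed: the paper states it only as Derksen's conjecture and refutes it in Section~\ref{s:v}. Taking $G$ cyclic of order $m$ acting on $Q^n=k[x_1,\ldots,x_n]$ by a primitive $m$th root of unity gives $R=V^{n,m}$, the $m$th Veronese subring, with $\tau_G(V)=|G|=m$. Since $V^{n,m}$ is free over $P^{n,m}=k[x_1^m,\ldots,x_n^m]$ with top basis element in degree $nm-\lceil n/m\rceil m$, its regularity over $S^{n,m}=k[Q^n_m]$ forces some $i$ with $t^{S^{n,m}}_i(V^{n,m})-im=nm-\lceil n/m\rceil m$, which exceeds $m$ (hence violates $t^S_i(R)\leq(i+1)m$) as soon as $n-\lceil n/m\rceil>1$, e.g.\ $n=3,\ m\geq 3$. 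So the gap in your proposal is not a repairable technical step; the conjectured slope is simply wrong.

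That said, your write-up is honest about this and your instincts are essentially those of the paper. The reduction via the Reynolds splitting to bounding $t^S_i(B)=\fin H_i(\mathbf f;B)$ is exactly the paper's starting point; your diagnosis that the naive mapping-cone induction on the generators cannot be anchored (partial Koszul quotients have positive dimension) is why the paper instead runs a local-cohomology/spectral-sequence argument (Lemma~\ref{l:boundtor}, Proposition~\ref{p:Lind}, Theorem~\ref{t:L}) that only uses $\fin H^n_{\mathfrak m}$ of the Koszul terms and the invariants $T_j(I)$; and your guess that a weakened bound with an additive correction should hold is confirmed by Theorem~\ref{t:main}, which gives $t^S_i(R)\leq(i+1)\tau_G(V)+i-1$, improving to $(i+1)\tau_G(V)-1$ when $t^S_1(I)\leq\tau_G(V)$. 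What your proposal lacks, beyond the impossibility of the stated bound, is any concrete counterexample and any mechanism for converting the finiteness of $\fin(B/I)$ into a bound growing linearly in $i$; the paper supplies both, via the Veronese computation and via Corollary~\ref{c:koszul} respectively.
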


Derksen proved the case $i=1$. Recently, Snowden \cite{snowden} showed that $t^S_i(R) \leq i |G|^3$.

In Section~\ref{s:v} we will show that the Veronese subrings form a collection of counterexamples with $\tau_G(V)=|G|$.

However, we do have a positive result that is close to the original conjecture.

\begin{theorem}
\label{t:main}
In the non-modular case, or whenever $R:=B^G \hookrightarrow B$ is split over $B^G$, 
\[
t^S_i(R) \leq (i+1)\tau_G(V) +i-1 \leq (i+1)|G|+i-1.
\]
If, furthermore, $t^S_1(I) \leq \tau_G(V)$, then 
\[
t^S_i(R) \leq (i+1)\tau_G(V) -1 \leq  (i+1)|G|-1.
\]
\end{theorem}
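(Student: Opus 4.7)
The strategy is to reduce to bounding $t^S_i(B)$ via the splitting and then to prove the bound on $t^S_i(B)$ by a direct Koszul reduction. In either case of the hypothesis, the inclusion $R \hookrightarrow B$ admits an $R$-linear retraction (the Reynolds operator in the non-modular case, or the given retraction), which is automatically $S$-linear through $S \twoheadrightarrow R$. Hence $\Tor^S_i(R,k)$ is a direct summand of $\Tor^S_i(B,k)$ and $t^S_i(R) \leq t^S_i(B)$, so it suffices to bound the latter. An auxiliary observation I would record is that every minimal generator $f_j$ of $R$ satisfies $\deg f_j \leq \tau_G(V) - 1$: if $\deg f_j \geq \tau_G(V)$ then $f_j \in B_{\deg f_j} = I_{\deg f_j} \subset R_+B$, and applying the retraction yields $f_j \in R_+^2$, contradicting minimality.

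To bound $t^S_i(B) = \fin H_i(K(f;B))$, where $K(f;B) = K(x_1,\ldots,x_r;S) \otimes_S B$, I would take a Koszul cycle $\alpha = \sum_{|T|=i} b_T\, e_T$ of total degree $d > (i+1)\tau_G(V) + i - 1$ and show it is a Koszul boundary. Each coefficient $b_T$ has degree $d - \sum_{j \in T} d_j \geq d - i(\tau_G(V)-1) > \tau_G(V) - 1$, so $b_T \in B_{\geq \tau_G(V)} = I_{\geq \tau_G(V)}$, and we can write $b_T = \sum_j c_{T,j} f_j$ with $c_{T,j} \in B$. The natural Koszul boundary candidate built from these coefficients, namely $\beta = \sum_{T,\, j \notin T} \pm c_{T,j}\, e_{T \cup \{j\}}$, satisfies $d(\beta) = \alpha$ plus cross terms of the form $\pm c_{T,j}\, f_k\, e_{T \cup \{j\} \setminus \{k\}}$ with $k \in T$, $j \notin T$, which must be carefully controlled. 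A parallel track handles the contribution of $B/I$ via the long exact sequence for $0 \to I \to B \to B/I \to 0$: since each $f_j \in I$ acts as zero on $B/I$, the complex $K(f;B/I)$ has vanishing differentials and $H_i(K(f;B/I)) = K_i(f;k) \otimes_k B/I$ has top degree at most $(i+1)(\tau_G(V) - 1)$, which is strictly inside the target.

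The main obstacle is this control of the cross terms. For $i = 1$ it is handled by Derksen's symmetrization; for general $i$ they must be systematically reabsorbed using syzygies of $B$ over $S$, at a controlled cost in degree. I expect the paper's announced general results on degree bounds for the homology of complexes to furnish precisely the right bookkeeping — likely a spectral sequence on $K(f;B)$ or a double-complex argument — yielding an inductive step in which $t^S_i(B)$ exceeds $t^S_{i-1}(B)$ by at most $\tau_G(V) + 1$ (respectively $\tau_G(V)$, once the hypothesis $t^S_1(I) \leq \tau_G(V)$ directly controls the first non-trivial Koszul syzygies of $I$). Combined with the base case $t^S_0(B) = \tau_G(V) - 1$, this telescopes to the two claimed bounds.
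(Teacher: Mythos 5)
Your reduction to $t^S_i(B)$ via the splitting is exactly the paper's first step, and your observation that the coefficients $b_T$ of a Koszul cycle of degree $> (i+1)\tau_G(V)+i-1$ lie in $I$ is sound. But the argument stops where the real work begins: the ``systematic reabsorption of the cross terms'' is precisely the content of the theorem, and you defer it to results you ``expect'' to exist. In particular, the inductive step you posit --- that $t^S_i(B)$ exceeds $t^S_{i-1}(B)$ by at most $\tau_G(V)+1$ (resp.\ $\tau_G(V)$) --- is neither proved by you nor what the paper actually establishes: clean subadditivity of Betti degrees of the form $t_i\leq t_{i-1}+c$ is exactly the kind of statement that fails in general, and the paper's inductive inequality (Proposition~\ref{p:Lind} and Theorem~\ref{t:L}, obtained from a \v{C}ech--Koszul double complex, or alternatively the change-of-rings spectral sequence of Section~\ref{s:ss}) bounds $\fin H_i$ in terms of \emph{all} lower homologies $H_{i-j+1}$, with costs $t^B_j(B/I)$ and $T_{i-j}(I)$, together with $\fin H^n_{\mathfrak m}$ of the Koszul terms; it only telescopes to $(i+1)\tau_G(V)+i-1$ because $t^B_j(B/I)\leq \fin(B/I)+j$. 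Without supplying some such mechanism (the elementary cross-term route for the first bound is carried out in the Bruns--Conca--R\"omer paper cited here, but you would still have to do it), your outline asserts the theorem rather than proving it; the same applies to the refined bound under $t^S_1(I)\leq\tau_G(V)$, where you only say the hypothesis should ``directly control'' the relevant syzygies.

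There is also a concrete error: your auxiliary claim that every minimal generator of $R$ has degree at most $\tau_G(V)-1$ is false. The Veronese subrings of Section~\ref{s:v} give counterexamples: for $R=V^{n,m}$ one has $I=\mathfrak m^m$, so $\tau_G(V)=m$, yet $R$ requires generators in degree exactly $m$. Your retraction argument breaks down because in an expression $f_j=\sum_a r_ab_a$ with $r_a\in R_+$ some $b_a$ may be constants; applying the Reynolds operator then only yields $f_j\in R_+^2+R_{\deg f_j}$, which is no contradiction to minimality. The correct statement (Lemma~\ref{l:sg}) is $\deg f_j\leq \fin(B/I)+1=\tau_G(V)$, improving to $\tau_G(V)-1$ only under the additional hypothesis $t^B_1(I)\leq\tau_G(V)$ together with domain and dimension assumptions. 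Fortunately your degree count survives the correction ($\deg b_T> \tau_G(V)-1$ still holds when $\deg f_j\leq\tau_G(V)$), but it does mean the bookkeeping in the missing main step has less slack than you assumed.
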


We also have the following result.

\begin{theorem}
\label{t:R}
Suppose that $B^G \hookrightarrow B$ is split over $B^G$ and write $R:=B^G$. Then  $t^R_0(k)=0$, $t^R_1(k) \leq \tau_G(V)$ and $t^R_i(k) \leq \tau_G(V)i+i-2$ for $i \geq 2$.

If $t^B_1(I) \leq \tau_G(V)$ then $t^R_i(k) \leq \tau_G(V)i-1$ for $i \geq 2$.
\end{theorem}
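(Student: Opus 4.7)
The plan is to bootstrap the bounds on $t^S_i(R)$ supplied by Theorem~\ref{t:main} to bounds on $t^R_i(k)$, via the Cartan--Eilenberg change-of-rings spectral sequence associated to the surjection $S \twoheadrightarrow R$.

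The base cases are immediate: $t^R_0(k)=0$ because $k=R/R_+$, and $t^R_1(k)\leq \tau_G(V)$ expresses the fact that $R$ is generated in degrees at most $\tau_G(V)$ as a $k$-algebra, which holds under the splitting hypothesis by the results of Broer (and in the non-modular case by \cite{ds}) cited in the introduction.

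For $i\geq 2$, consider the Cartan--Eilenberg spectral sequence
\[
E^2_{p,q} = \Tor^R_p\bigl(k,\Tor^S_q(R,k)\bigr) \Longrightarrow \Tor^S_{p+q}(k,k).
\]
The abutment is the Koszul homology $\Tor^S_n(k,k)$, whose internal degree is bounded by the sum of the top $n$ generator degrees, and hence by $n\tau_G(V)$. Each module $\Tor^S_q(R,k)$ is $R_+$-torsion, as one sees from the Koszul complex $K^R = K^S\otimes_S R$ computing it (each $f_i$ is a boundary in $K^R$, so acts as zero on homology). Therefore $\Tor^S_q(R,k)$ splits as a graded direct sum of shifted copies of $k$ with shifts at most $t^S_q(R)$, yielding $\fin E^2_{p,q} \leq t^R_p(k) + t^S_q(R)$ for $q\geq 1$. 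The bottom row is $E^2_{p,0} = \Tor^R_p(k,k)$, which receives no incoming differential in a first-quadrant spectral sequence. Consequently $\Tor^R_i(k,k) = E^2_{i,0}$ is filtered by a descending chain $E^\infty_{i,0}\subset\cdots\subset E^2_{i,0}$, in which $E^\infty_{i,0}$ is a quotient of the abutment $\Tor^S_i(k,k)$ while each successive quotient $E^r_{i,0}/E^{r+1}_{i,0}$ embeds into $E^r_{i-r,r-1}\subset E^2_{i-r,r-1}$. Setting $q=r-1$ gives the recurrence
\[
t^R_i(k)\leq \max\Bigl(\,i\tau_G(V),\ \max_{1\leq q\leq i-1}\bigl(t^R_{i-1-q}(k)+t^S_q(R)\bigr)\Bigr).
\]
Substituting the Theorem~\ref{t:main} estimate $t^S_q(R)\leq (q+1)\tau_G(V)+q-1$ and applying induction, the binding case is $q=i-1$: this gives $t^R_0(k)+t^S_{i-1}(R)\leq i\tau_G(V)+i-2$, while the remaining terms are strictly smaller. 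This establishes the first bound.

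For the refined bound under $t^B_1(I)\leq \tau_G(V)$, the same recurrence used with the sharper estimate $t^S_q(R)\leq (q+1)\tau_G(V)-1$ from the second part of Theorem~\ref{t:main} brings the syzygy terms down to at most $i\tau_G(V)-1$. The remaining obstacle is the edge term $i\tau_G(V)$ coming from $\fin\Tor^S_i(k,k)$: this exceeds the desired bound by one. To finish, one must argue that under this hypothesis the top-degree part of $\Tor^S_i(k,k)$ is absorbed by the lower filtration $F^{i-1}$, so that $\fin E^\infty_{i,0}\leq i\tau_G(V)-1$. This final refinement, linking the hypothesis on $B$-syzygies of $I$ to the edge differentials of the Cartan--Eilenberg spectral sequence, is where I expect the main technical difficulty to lie.
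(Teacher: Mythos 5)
Your argument for the first set of bounds is essentially the paper's own: the same change-of-rings spectral sequence $E^2_{p,q}=\Tor^R_p(\Tor^S_q(R,k),k)\Rightarrow\Tor^S_{p+q}(k,k)$, the same estimate $\fin E^2_{p,q}\leq t^R_p(k)+t^S_q(R)$ (the paper gets it by filtering by degree; you get it because $S\twoheadrightarrow R$ makes $\Tor^S_q(R,k)$ a graded $k$-vector space), and the same recurrence obtained by bounding the bottom-row entry by the abutment together with the targets of its outgoing differentials --- this is exactly inequality~\ref{e:ss1} applied to $A=S\twoheadrightarrow C=R$. Your induction, using $t^S_q(R)\leq(q+1)\tau_G(V)+q-1$ and the Koszul bound $\fin\Tor^S_i(k,k)\leq i\,\tau_G(V)$ (which rests on Lemma~\ref{l:sg} giving minimal generators of $R$ in degrees at most $\tau_G(V)$), checks out and matches the paper's proof of its Section~5 theorem. (A minor imprecision: $E^r_{i-r,r-1}$ is a subquotient, not a submodule, of $E^2_{i-r,r-1}$, but this does not affect the end estimate.)

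The refined bound, however, is left with a genuine gap, which you yourself flag: the abutment term $i\,\tau_G(V)$ exceeds the target $i\,\tau_G(V)-1$, and your proposed remedy --- showing that the top-degree part of $\Tor^S_i(k,k)$ is ``absorbed'' by the lower filtration steps --- is neither proved nor the route the paper takes. The missing idea is the second half of Lemma~\ref{l:sg}: when $t^B_1(I)\leq\fin(B/I)+1=\tau_G(V)$ and $B$ is a domain of dimension at least $2$ (as $B=k[V]$ is; the case $\dim B\leq 1$ is vacuous), one must have $t^B_0(I)\leq\fin(B/I)$, since $t^B_1(I)\leq t^B_0(I)$ would force $I$ to be principal. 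Hence $R$ is in fact generated in degrees at most $\tau_G(V)-1$, so for $S$ built on a minimal generating set the edge term improves to $\fin\Tor^S_i(k,k)\leq i(\tau_G(V)-1)\leq i\,\tau_G(V)-2$ for $i\geq 2$, and the obstruction disappears; the rest of your induction, now using $t^S_q(R)\leq(q+1)\tau_G(V)-1$ from the second part of Theorem~\ref{t:main} (equivalently Corollary~\ref{c:ssboundplus}), closes without any analysis of edge differentials. Until you replace the unproved absorption step by this improved generator-degree bound, the second statement of the theorem is not established by your proposal.
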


In sections \ref{s:gb} and \ref{s:ss}, we present two different approaches to obtaining general results that give degree bounds on homology groups; these both yield Theorem~\ref{t:main} as a special case. We hope that these results will be of more general interest. Indeed, the subject of degree bounds has been explored by many authors. Particularly relevant here is the work of Bruns, Conca and R{\"o}mer \cite{bcr1, bcr2}, but there is a great deal more interesting work, e.g.\ \cite{aci1, aci2, cm}.

\section{Preliminaries}

We work over a base field $k$. All modules and rings are $\mathbb Z$-graded, ideals and elements are assumed to be homogeneous and homomorphisms preserve degree. All rings are non-negatively graded noetherian $k$-algebras with just $k$ in degree 0. A module is said to be bounded below if it is 0 in large negative degrees. The ideal of elements of positive degree in a ring $A$ is denoted by $A_+$ or $\mathfrak m$.

The end of a module is the largest degree in which it is not 0. This takes the value $\infty$ if the degree is not bounded and $-\infty$ if the module is 0. It has the property that $\fin (M \otimes _k N) = \fin M + \fin N$ always, provided we adopt the rather strange convention that $\infty - \infty = -\infty$. All statements of our results are valid in the generality given if this is understood, although they might be trivial in extreme cases.

Given a ring $A$ and an $A$-module $M$, we define $t^A_i(M):= \fin \Tor ^A_i(M,k)$. If $M$ is bounded below then it has a minimal free resolution and $t^A_i(M)$ is equal to the top degree of a basis element of the $i$th term.

We say that an inclusion of rings $A \hookrightarrow B$ is split if it is split as a homomorphism of $A$-modules. If a finite group acts on $B$ and $A = B^G$ then the inclusion is split by the Reynolds operator $b \mapsto {{1}\over{|G|}} \sum _{g \in G} gb$, provided that $\cha k$ does not divide the order of $G$.

In the modular case, let $P$ denote a Sylow $p$-subgroup of $G$. Then the inclusion $B^G \hookrightarrow B^P$ is split by $b \mapsto {{1}\over{|G/P|}} \sum _{g \in G/P} gb$. It follows that $B^G \hookrightarrow B$ is split if $B^P \hookrightarrow B$ is split. Thus a sufficient condition for $B^G \hookrightarrow B$ to be split is that $B^P$ be polynomial. In fact, it is a conjecture that this condition is necessary for $B^P \hookrightarrow B$ to be split \cite{broer}.

In the context of Derksen's conjecture, we will always assume that the inclusion $R=B^G \hookrightarrow B$ is split. It follows that $t^S_i(R) \leq t^S_i(B)$; all our bounds on $t^S_i(R)$ will be found as bounds on $t^S_i(B)$.

In the construction of the ring $S$ in Derksen's conjecture, it is the bound on the degrees of the generators that is important, not the linear independence of their images in $R$. In this connection, the following lemma will be useful.

\begin{lemma}
\label{l:sg}
Suppose that the ring $B$ is standard graded, $C$ is a subring and the inclusion is split over $C$; set $I:=C_+B$.  Given any set of generators of $C$, the subset consisting of those elements of degree at most $\fin (B/I) +1$ also generates $C$. In particular, a minimal set of generators has elements of degree at most $\fin (B/I) +1$.

If $t^B_1(I) \leq \fin (B/I) +1$, $B$ is an integral domain and $\dim B \geq 2$, then the subset of elements of degree at most $\fin (B/I)$ generates $C$.
\end{lemma}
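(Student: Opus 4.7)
Set $\tau := \fin(B/I)+1$, so $B_d = I_d$ for $d \geq \tau$, and write $\pi\colon B \twoheadrightarrow C$ for the given $C$-module splitting. For the first assertion I would let $C' \subseteq C$ be the subring generated by the $f_\alpha$'s with $\deg f_\alpha \leq \tau$ and prove $C = C'$ by induction on $\deg c$ for homogeneous $c$. If $\deg c \leq \tau$, any polynomial expression of $c$ in the $f_\alpha$'s contributes in degree $\deg c$ only through monomials whose factors have degree $\leq \tau$. If $\deg c > \tau$, then $c \in B_{\deg c}=I_{\deg c}=(C_+B)_{\deg c}$ lets us write $c = \sum_i b_i c_i$ in $B$ with $c_i \in C_+$ and $b_i \in B$ of degree $< \deg c$; applying $C$-linearity of $\pi$ gives $c = \sum_i \pi(b_i)c_i$ with both factors of degree $< \deg c$, and induction closes.

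For the second assertion, writing $C''$ for the subring of $C$ generated by $C_{\leq \tau-1}$, the same induction handles all degrees $>\tau$, so it remains to show $C_\tau \subseteq C''$. My plan reduces this to the \emph{core claim} that, under the added hypotheses, $I$ has no minimal $B$-generator of degree exactly $\tau$. Granting the claim, $(I/B_+I)_\tau = 0$, so $C_\tau \subseteq I_\tau = (B_+I)_\tau$; writing $c \in C_\tau$ as $\sum_i b_i \ell_i$ with $b_i \in B_+$ and $\ell_i \in I$, expanding each $\ell_i$ in generators drawn from $C_+$, and applying $\pi$ then exhibits $c$ as a sum of products of two elements of $C$ of positive degree at most $\tau-1$, whence $c \in C''$.

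To establish the core claim I would fix a minimal $B$-generating set $g_1,\dots,g_n$ of $I$ lying in $C_+$ (possible by the first assertion) and suppose for contradiction that $\deg g_1 = \tau$. If $n=1$, then $I=(g_1)$ and $(B/I)_\tau = 0$ force $\dim_k B_\tau = 1$; this contradicts the lower bound $\dim_k B_\tau \geq 2$, which I would verify for any standard graded domain of Krull dimension $\geq 2$ by picking linearly independent $x,y \in B_1$ and a nonzero $z \in B_{\tau-1}$, and noting that $xz$ and $yz$ are linearly independent in $B_\tau$ by integrality. If $n \geq 2$, I would use the Koszul syzygy $s := g_2 e_1 - g_1 e_2 \in B^n$, which has degree $\tau+\deg g_2 > \tau$; by $t^B_1(I) \leq \tau$, $s$ decomposes as $\sum_j h_j \sigma^{(j)}$ with $\sigma^{(j)}$ minimal syzygies of degree $\leq \tau$, so each first coordinate $\sigma^{(j)}_1$ lies in $B_{\leq 0} = k$. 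A nonzero such scalar would express $g_1$ as a $B$-combination of the other $g_i$'s, contradicting minimality; if all such scalars vanish, the first coordinate of $s$, namely $g_2$, is $0$, again a contradiction. The hardest part of the whole argument is spotting this Koszul maneuver together with the syzygy-degree squeezing it provides; the integrality of $B$ and the assumption $\dim B \geq 2$ enter only in the edge case $n=1$.
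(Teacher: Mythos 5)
Your second half is essentially sound: the Koszul maneuver is a direct proof of the fact the paper merely quotes as well known (that $t^B_1(I)\le t^B_0(I)$ over a domain forces $I$ to be principal), and your $n=1$ case uses the domain and $\dim B\ge 2$ hypotheses correctly. The genuine problem is in the induction that carries both parts, at the step $\deg c>\tau$. From $c\in B_{\deg c}=I_{\deg c}=(C_+B)_{\deg c}$ you write $c=\sum_i b_ic_i$ with $c_i\in C_+$ and assert that after applying $\pi$ ``both factors have degree $<\deg c$''. That is false as stated: nothing prevents some $b_i$ from being a nonzero scalar, in which case $\deg c_i=\deg c$ and the induction does not close --- you have only rewritten $c$ in terms of other elements of $C$ of the same degree. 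Tellingly, your argument for part one nowhere uses that $B$ is standard graded, while the statement fails without that hypothesis (take $B=C=k[x]$ with $\deg x=2$: then $\fin (B/I)+1=1$, but the subset of generators of degree $\le 1$ is empty), so something essential is missing at exactly this point.

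What is missing is that $I$ itself is generated in degrees at most $\tau$, and this is where standard gradedness must enter: either directly, since for $d>\tau$ one has $B_d=B_1B_{d-1}=B_1I_{d-1}\subseteq B_+I$, so $I$ has no minimal generator in degree $>\tau$; or, as the paper argues, via $t^B_0(I)=t^B_1(B/I)\le\reg (B/I)+1\le\fin (B/I)+1$. Granting this, the given generators of $C$ of degree $\le\tau$ generate $I$ as an ideal (the two ideals agree in degrees $\le\tau$, where all minimal generators of $I$ live), so every homogeneous $c\in C_+$ can be written as $c=\sum_\alpha b_\alpha f_\alpha$ with $\deg f_\alpha\le\tau$; applying $\pi$ makes each coefficient $\pi(b_\alpha)\in C$ of strictly smaller degree while the other factor is one of the designated low-degree generators, and the induction closes. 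The same repair is needed in your part-two induction for degrees $>\tau$; with it, your handling of degree exactly $\tau$ via the core claim is correct, modulo the routine remark that generation by all of $C_{\le\tau-1}$ yields generation by the given generators of degree $\le\tau-1$.
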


\begin{proof}
Let $J<B$ be the ideal generated by the elements of the generating set of degree at most $e+1$, where $e = \fin (B/I)$. 
We know that $t^B_0(I)=t^B_1(B/I)\leq \reg (B/I)+1 \leq e +1$ and $I_r=J_r$ for $r \leq e+1$; thus $I=J$. Applying the splitting map from $B$ to $C$ yields the result. 

For the second part, note that we must have $t^B_0(I) \leq e$. This is because it is well known and easy to show that if $t^B_1(I) \leq  t^B_0(I)$ and $B$ is an integral domain then $I$ must be principal; thus $\fin (B/I) < \infty$ entails $\dim B \leq 1$. The rest of the proof proceeds as before. 
\end{proof}

\section{Veronese Subrings}
\label{s:v}

Let $Q^n=k[x_1, \ldots ,x_n]$, with the $x_i$ in degree 1. Given $m \in \mathbb N$, the $m$th Veronese subring is $V^{n,m} = \oplus _i Q^n_{im}$. 

Let $G$ be a cyclic group of order $m$ with generator $g$. Suppose that $w \in k$ is a primitive $m$th root of unity and let $G$ act on $Q$ by $gx_i=wx_i$. Then $(Q^n)^G=V^{n,m}$. Thus, as long as $k$ contains a primitive $m$th root of unity, $V^{n,m}$ is a ring of invariants, and the map $V^{n,m} \hookrightarrow Q^n$ is split, for degree reasons.

Let $P^{n,m}=k[x^m_1, \ldots , x^m_n ]$ and $S^{n,m}=k[Q^n_m]$. Then there is a natural surjection $S^{n,m} \twoheadrightarrow V^{n,m}$ and $Q^n$ is free over $P^{n,m}$ with basis the monomials $x^{i_1}_1,  \cdots , x^{i_n}_n$, $0 \leq i_j \leq m-1$. Thus $V^{n,m}$ is free over $P^{n,m}$ with basis the subset of these monomials with $i_1 + \cdots + i_n$ divisible by $m$. The highest degree of such a monomial is $nm-\lceil \frac{n}{m} \rceil m$. Now let $\bar{V}^{n,m}$, $\bar{P}^{n,m}$ and $\bar{S}^{n,m}$ be $V^{n,m}$, $P^{n,m}$ and $S^{n,m}$ with the degrees divided by $m$; thus $\bar{P}^{n,m}$ is standard graded and $\bar{V}^{n,m}$ is free over $\bar{P}^{n,m}$ with the highest degree of a basis element being $n-\lceil \frac{n}{m} \rceil $.

It follows that $\reg \bar{V}^{n,m} = n-\lceil \frac{n}{m} \rceil $. But, working over $\bar{S}^{n,m}$, $\reg \bar{V}^{n,m} = \max _i \{ t^{\bar{S}^{n,m}}_i (\bar{V}^{n,m}) -i \}$. It follows that there is an $i$ such that $ t^{\bar{S}^{n,m}}_i(\bar{V}^{n,m})-i=n-\lceil \frac{n}{m} \rceil $. Multiplying all degrees by $m$, we see that $ t^{S^{n,m}}_i(V^{n,m})-im=nm-\lceil \frac{n}{m} \rceil m$. 

Derksen's conjecture predicts that $ t^{S^{n,m}}_i(V^{n,m}) \leq (i+1)m$, so there is a contradiction if $n-\lceil \frac{n}{m} \rceil >1$. This happens if $n=3$ and $m \geq 3$ or $n \geq 4$ and $m \ne 1$.

That Veronese subrings give counterexamples can also be deduced from \cite[Corollary 4.2]{bcr1}. 

\section{General Bounds for Complexes}
\label{s:gb}

The next lemma is a standard consequence of local duality when $B$ is a polynomial ring. 

\begin{lemma}
\label{l:boundtor}
Let $B$ be a $k$-algebra and let $M$ and $N$ be two $B$-modules such that $N$ is bounded below. Suppose that $\dim \Tor ^B_i(M,N) \leq 1$ for $i \geq 1$. Then
\[
\max \{ \fin H^0_{\mathfrak m} (\Tor ^B_i(M,N)) , \fin H^1_{\mathfrak m} (\Tor ^B_{i+1}(M,N)) \} \leq \max_{0 \leq j \leq \dim M}\{\fin H_{ \mathfrak m}^j(M) +  t^B_{i+j}(N) \}.
\]
\end{lemma}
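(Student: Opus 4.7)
The plan is to combine the \v{C}ech complex for $\mathfrak m$ with a minimal graded free $B$-resolution $F_\bullet\to N$. Let $\check{C}^\bullet$ be the \v{C}ech complex on a finite generating set of $\mathfrak m$, so that $H^p(\check{C}^\bullet\otimes_B -)=H^p_{\mathfrak m}(-)$, and form the bicomplex $K^{p,q}:=\check{C}^p\otimes_B M\otimes_B F_{-q}$ with $p\geq 0$ and $q\leq 0$. Each $F_{-q}$ is $B$-free and each $\check{C}^p$ is $B$-flat, so the two standard filtrations on $\Tot K$ yield spectral sequences
\[
{}_IE_2^{p,q}=\Tor_{-q}^B(H^p_{\mathfrak m}(M),N)\quad\text{and}\quad {}_{I\!I}E_2^{p,q}=H^p_{\mathfrak m}(\Tor_{-q}^B(M,N)),
\]
both abutting to $H^{p+q}(\Tot K)$.

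The next step is to see that the two quantities on the left of the claimed inequality survive in $H^{-i}(\Tot K)$ via the second spectral sequence. The hypothesis $\dim\Tor^B_j(M,N)\leq 1$ for $j\geq 1$ forces ${}_{I\!I}E_2^{p,q}=0$ whenever $q\leq -1$ and $p\geq 2$. A short bookkeeping check on the differentials $d_r$, which have bidegree $(r,1-r)$, then shows that for every $i\geq 0$ the two positions $(0,-i)$ and $(1,-i-1)$ are permanent cycles that are not hit by any nonzero differential: incoming $d_r$ have source column $p<0$, while outgoing $d_r$ have target in column $p\geq 2$ and row $q\leq -i-1$, which is a zero group. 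Both terms therefore survive to $E_\infty$ on the diagonal $p+q=-i$, giving
\[
\max\bigl\{\fin H^0_{\mathfrak m}(\Tor_i^B(M,N)),\ \fin H^1_{\mathfrak m}(\Tor_{i+1}^B(M,N))\bigr\}\leq \fin H^{-i}(\Tot K).
\]

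To bound the right-hand side I would then read off from the first spectral sequence. Since each graded piece of the abutment is a subquotient of some $E_\infty^{p,q}$ with $p+q=-i$, and each $E_\infty^{p,q}$ is in turn a subquotient of ${}_IE_2^{p,q}$, it suffices to bound $\fin \Tor_{i+j}^B(H^j_{\mathfrak m}(M),N)$ for $0\leq j\leq\dim M$ (outside this range $H^j_{\mathfrak m}(M)=0$). This $\Tor$ is a subquotient of $H^j_{\mathfrak m}(M)\otimes_B F_{i+j}$, whose end is $\fin H^j_{\mathfrak m}(M)+t^B_{i+j}(N)$ by minimality of $F_\bullet$. Taking the maximum over $j$ then delivers the stated inequality.

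The one thing requiring care is the bookkeeping in the graded and bounded-below setting: one must check that the two spectral sequences converge as claimed when $M$ and $N$ need only be bounded below, and that ends are correctly controlled by passage to subquotients and filtered pieces of the abutment. These are routine given that all differentials preserve degree and that $F_\bullet$ is minimal, so I do not expect a substantive obstacle beyond the diagram chase above.
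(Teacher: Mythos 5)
Your proposal is correct and follows essentially the same route as the paper: the same double complex (\v{C}ech complex tensored with $M$ and a minimal free resolution of $N$), with one spectral sequence degenerating at the positions $H^0_{\mathfrak m}(\Tor_i)$ and $H^1_{\mathfrak m}(\Tor_{i+1})$ thanks to the hypothesis $\dim\Tor^B_j(M,N)\leq 1$, and the other bounding the total homology by $\fin H^j_{\mathfrak m}(M)+t^B_{i+j}(N)$ via minimality of the resolution. The only cosmetic differences are that the paper records a full collapse (a direct sum decomposition of $H_i(\Tot)$) where you argue survival of the two relevant positions, and it bounds ends at the $E^1$ page rather than at your $E_2$ terms $\Tor_{i+j}^B(H^j_{\mathfrak m}(M),N)$.
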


\begin{proof}
Let $C$ be the \v{C}ech complex on a homogeneous system of parameters of $B$ and let $F$ be a minimal free resolution of $N$ (this is where we require $N$ to be bounded below). Consider the double complex $Y=C \otimes F \otimes M$, where $Y_{p,q}=C^{-p} \otimes _B F_q \otimes _B M$, and its associated spectral sequences.

We have ${}^IE^1_{p,q} \cong \Tor ^B_q(N,C^{-p} \otimes _B M) \cong \Tor ^B_q(N,M) \otimes _B C^{-p}$, since $C^{-p}$ is flat; thus ${}^IE^2_{p,q} \cong H^{-p}_{ \mathfrak m}(\Tor^B_q(N,M))$. The hypothesis that $\dim \Tor ^B_i(M,N) \leq 1$ for $i \geq 1$ implies that ${}^IE^2_{p,q}=0$ if $q \ne 0$ and $p \ne 0,-1$. The spectral sequence collapses and we obtain $H_i(\Tot Y) \cong H^0_{\mathfrak m} \Tor ^B_i(N,M) \oplus H^1_{\mathfrak m} \Tor ^B_{i+1}(N,M)$ as $k$-modules for $i \geq -1$.

But ${}^{II}E^1_{p,q} \cong H^{-q}_{ \mathfrak m}(F_p \otimes _B M) \cong H^{-q}_{ \mathfrak m}( M) \otimes_B F_p$, since $F_p$ is flat. 
Because $F_\bullet$ is minimal, we know that the top degree of a basis element of $F_p$ is $t^B_p (N)$; hence $\fin (H^{-q}_{ \mathfrak m}( M) \otimes_B F_p) \leq \fin H^{-q}_{ \mathfrak m}( M) +  t^B_p (N)$.

Thus $\fin H_i(\Tot Y) \leq \max _{p+q=i} \{ \fin H^{-q}_{ \mathfrak m}( M) +  t^B_p (N) \}$.
\end{proof}

Given a complex $L$, we set $H_i:=H_i(L)$ and $Z_i:=\ker ( d_i \! : L_i \rightarrow L_{i-1})$.  

\begin{proposition}
\label{p:Lind}
Let $L$ be a complex of $B$-modules and set $c^j_i(L)= \max_{k \geq 0} \{ \fin H^{j-k}_{\mathfrak m}(L_{i-k}) \}$ and $\dim B =n$. Then, for any $i \in \mathbb Z$,
\[
\reg Z_i \leq \max_{2-k \leq j \leq n } \{ \fin H^k_{\mathfrak m} (H_{i+1-j-k})+j, \enskip c^j_i(L)+j \}.
\]
If $I<B$ is an ideal such that $\dim B/I \leq 1$ and $IH_i=0$, then
\begin{align*}
\fin H^0_{\mathfrak m}(H_i) & \leq \max \{ \{ \max \{ \fin H^k_{\mathfrak m}(H_{i+1-j-k}), \enskip  c^j_i(L) \} + t^B_j(B/I) \}_{2 \leq j+ k }, \enskip \fin H^1_{\mathfrak m}(L_{i+1} \otimes B/I) \} \\
\fin H^1_{\mathfrak m}(H_i) & \leq  \max_{2 \leq j+k } \{ \max \{ \fin H^k_{\mathfrak m}(H_{i+1-j-k}), \enskip  c^j_i(L) \} + t^B_{j-1}(B/I) \}.
\end{align*}
In particular, if $H_j$ is ${\mathfrak m}$-torsion for $i-n+1 \leq j \leq i-1$ and $\depth L_{i-k} \geq \min \{ n+1-k,n \}$ for $k \geq 0$, then
\[
\reg Z_i \leq \max \{ \{ \fin (H_{i-j+1} )+j \}_{2 \leq j \leq n} , \enskip \fin H^n_{\mathfrak m} (L_i) +n \}.
\]
If, in addition, $\dim B/I=0$ and $IH_i=0$, then
\[ 
\fin H_i \leq \max \{ \{ \fin (H_{i-j+1}) + t^B_j(B/I) \}_{2 \leq j \leq n}, \enskip \fin H^n_{\mathfrak m} (L_i) + t^B_n(B/I) \}.
\]
\end{proposition}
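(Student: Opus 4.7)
The final statement is a specialization of the bound on $\fin H^0_{\mathfrak m}(H_i)$ proved just above (the second displayed inequality of this proposition), applied to the setting of the ``in particular'' clause, with its torsion and depth hypotheses, together with the additional assumptions $\dim B/I = 0$ and $IH_i = 0$. The plan is to identify $\fin H_i$ with $\fin H^0_{\mathfrak m}(H_i)$ and then collapse each piece of the general bound under these hypotheses.

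First, since $IH_i = 0$, the module $H_i$ is a module over $B/I$; since $\dim B/I = 0$ forces $B/I$ to be Artinian, every $B/I$-module is $\mathfrak m$-torsion, so $H_i$ is $\mathfrak m$-torsion and $\fin H_i = \fin H^0_{\mathfrak m}(H_i)$. As $\dim B/I = 0 \leq 1$, the general bound on $\fin H^0_{\mathfrak m}(H_i)$ from the proposition applies.

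I then simplify each term of that general bound. The term $\fin H^1_{\mathfrak m}(L_{i+1} \otimes B/I)$ vanishes because $L_{i+1} \otimes_B B/I$ is a $B/I$-module and therefore $\mathfrak m$-torsion, so its only nonzero local cohomology is $H^0$. For the terms $c^j_i(L) + t^B_j(B/I)$, the depth hypothesis $\depth L_{i-k} \geq \min\{n+1-k, n\}$ forces $H^{j-k}_{\mathfrak m}(L_{i-k}) = 0$ except possibly at $(j,k) = (n,0)$, yielding $c^n_i(L) = \fin H^n_{\mathfrak m}(L_i)$ and $c^j_i(L) = -\infty$ for $j < n$. For the terms $\fin H^k_{\mathfrak m}(H_{i+1-j-k}) + t^B_j(B/I)$, the torsion hypothesis kills $H^k_{\mathfrak m}(H_m)$ for $k \geq 1$ and $m \in [i-n+1, i-1]$; the surviving contribution occurs at $k = 0$ with $i+1-j \in [i-n+1, i-1]$, i.e., $j \in [2, n]$, producing $\fin H_{i-j+1} + t^B_j(B/I)$.

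Assembling these simplifications gives the claimed bound $\fin H_i \leq \max\{\{\fin H_{i-j+1} + t^B_j(B/I)\}_{2 \leq j \leq n}, \fin H^n_{\mathfrak m}(L_i) + t^B_n(B/I)\}$. The main obstacle is the careful bookkeeping of index ranges, particularly confirming that contributions $\fin H^k_{\mathfrak m}(H_m)$ with $k \geq 1$ and $m < i-n+1$ (outside the range of the torsion hypothesis) do not add new terms to the final bound; this follows from the observation that such contributions are either absorbed by the $c^j_i(L)$ max inside the general bound, or vanish via $t^B_j(B/I) = -\infty$ for $j$ beyond the projective dimension of $B/I$.
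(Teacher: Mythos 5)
Your proposal does not actually prove the proposition: it establishes only the last displayed inequality, and it does so by invoking ``the second displayed inequality of this proposition'' as if it were already available. That inequality, together with the first bound on $\reg Z_i$, \emph{is} the substance of the statement, and proving it is where all the work lies. The paper's argument truncates the complex to $L^{(i)}\colon 0 \rightarrow Z_i \rightarrow L_i \rightarrow L_{i-1} \rightarrow \cdots$, forms the double complex of $L^{(i)}$ with the \v{C}ech complex on a system of parameters, and plays the two spectral sequences against each other: the second identifies $H^j_{\mathfrak m}(Z_i)$ as an $E^1$-entry with no incoming differentials, whose end is then bounded by the ends of the relevant total homology (computed from the first spectral sequence in terms of $H^k_{\mathfrak m}(H_q)$ for $q \leq i-1$) and by $c^j_i(L)$; the bounds on $H^0_{\mathfrak m}(H_i)$ and $H^1_{\mathfrak m}(H_i)$ then come from the surjection $Z_i \otimes_B B/I \twoheadrightarrow H_i$ (using $IH_i=0$) together with Lemma~\ref{l:boundtor}. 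None of this appears in your write-up, so as a proof of the proposition it is circular, or at best a proof of one quarter of it.

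Even taken only as a derivation of the final bound from the earlier displays, your index bookkeeping has a real gap. A term with $k \geq 1$ and $i+1-j-k < i-n+1$, for instance $j=1$, $k=n$, contributing $\fin H^n_{\mathfrak m}(H_{i-n}) + t^B_1(B/I)$, is neither ``absorbed by the $c^j_i(L)$ max'' (that is a separate entry of the maximum, not an upper bound for the homology entry) nor killed by $t^B_j(B/I)=-\infty$, since here $j$ is small (and for general $B$ the projective dimension of $B/I$ need not even be finite). The reason no such terms survive is only visible inside the spectral-sequence proof: the homology contributions to $\fin H^j_{\mathfrak m}(Z_i)$ come from $H^k_{\mathfrak m}(H_{i+1-j+k})$ with $0 \leq k \leq j-2$ and $j \leq \dim Z_i \leq n$, so every position lies in $[i+1-n,\,i-1]$ and the torsion hypothesis removes all terms with $k \geq 1$; this is how the displayed subscripts must be read, and it cannot be recovered from the statement alone. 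Your remaining simplifications are fine and agree with the paper: $\fin H_i = \fin H^0_{\mathfrak m}(H_i)$ since $\mathfrak m^N \subseteq I$ when $\dim B/I=0$, the term $\fin H^1_{\mathfrak m}(L_{i+1}\otimes B/I)$ is $-\infty$ for the same reason (the paper instead bypasses it by using $\fin H_i \leq \fin(Z_i\otimes B/I)$ and Lemma~\ref{l:boundtor} directly in the $\dim B/I=0$ case), and the depth hypotheses do give $c^j_i(L)=-\infty$ for $j \leq n-1$ and $c^n_i(L)=\fin H^n_{\mathfrak m}(L_i)$.
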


If the modules are not finitely generated then their depth is defined in terms of local cohomology.

\begin{proof}
Let $L^{(i)}$ be the truncated complex $0 \rightarrow Z_i \rightarrow L_i \rightarrow L_{i-1} \rightarrow  \cdots$. Note that $H_j(L^{(i)})$ is equal to $H_j$ if $j \leq i-1$ and 0 otherwise. 

Let $C$ be the \v{C}ech complex on a homogeneous system of parameters of $B$. Consider the double complex $X=C \otimes L^{(i)}$, where $X_{p,q}=C^{-q} \otimes _B L^{(i)}_p$, and its associated spectral sequences.

We have ${}^IE^1_{p,q} \cong C^{-p} \otimes _B H_q(L^{(i)})$, since $C^{-p}$ is flat; hence ${}^IE^2_{p,q} \cong H^{-p}_{ \mathfrak m} (H_q)$ if $q \leq i-1$ and is zero otherwise. 
Thus $\fin H_j (\Tot X) \leq \max_{q-p=j, \, q \leq i-1} \{ \fin H^{-p}_{\mathfrak m}(H_q) \}$. 

Also ${}^{II}E^1_{p,q} \cong H^{-q}_{ \mathfrak m}(L^{(i)}_p) \cong \begin{cases} H^{-q}_{ \mathfrak m} (L_p) & \mbox{ if $p \leq i$} \\ H^{-q}_{\mathfrak m}(Z_i) & \mbox{ if $p = i+1$} \\ 0 & \mbox{ otherwise.} \end{cases}$

Now $\fin {}^{II}E^1_{p,q}$ is bounded by the largest of $\fin {}^{II}E^\infty_{p,q}$ and the ends of the terms to which it is connected by a differential on some page. There is no incoming differential reaching a module $^{II}E^r_{i+1,q}$, $r \geq 1$, and the outgoing one lands in ${}^{II}E^r_{i-r+1,q+r-1}$, where $\fin {}^{II}E^r_{i-r+1,q+r-1} \leq \fin {}^{II}E^1_{i-r+1,q+r-1} = \fin H^{q+r-1}_{\mathfrak m}(L_{i-r+1}) \leq c^q_i(L)$. Since $\fin H^j_{\mathfrak m}(Z_i)= \fin {}^{II}E^1_{i+1,-j}$, it is bounded by the larger of $\fin H_{i+1-j} ( \Tot X)$ and $c^j_i(L)$. 

Putting this together and using the hypothesis $\dim B/I \leq 1$, we obtain $\fin H^j_{\mathfrak m} (Z_i) \leq \max \{ \{ \fin H^{-p}_{\mathfrak m} (H_{i+1-j+p}) \}_{ p \leq j-2 }, \enskip c^j_i(L) \}$. 

The first formula for $\reg Z_i$ follows immediately from the definition of regularity.

Because $IH_i=0$, multiplication gives a surjection $Z_i \otimes _B B/I \twoheadrightarrow H_i$. If $\dim B/I =0$ it follows that $\fin H_i \leq \fin Z_i \otimes _B B/I $. Otherwise, there is a an exact sequence of $B/I$-modules $0 \rightarrow K \rightarrow L_{i+1} \otimes B/I \rightarrow Z_i \otimes B/I \rightarrow H_i \rightarrow 0$ for some module $K$. Splitting this into two short exact sequences and using the long exact sequence in local cohomology and the hypothesis that $\dim B/I \leq 1$ yields $\fin H^0_{\mathfrak m}(H_i) \leq \max \{ \fin H^0(Z_i \otimes B/I) , \enskip \fin H^1_{\mathfrak m} (L_{i+1} \otimes B/I) \}$ and $\fin H^1_{\mathfrak m}(H_i) \leq \fin H^1_{\mathfrak m} (Z_i \otimes B/I)$.    The local cohomology of the tensor product can be estimated using Lemma~\ref{l:boundtor}, which yields the next pair of formulas. 

The last part follows because the conditions on $\depth L_{i-k}$ force $c^j_i(L)=- \infty$ for $j \leq n-1$ and $c^n_i(L)= \fin H^n_{\mathfrak m}(L_i)$.
\end{proof} 

For $I<B$, set
$$
T_{j}(I):=\max\{ t^B_{i_1}(I)+\cdots +t^B_{i_r}(I)\ \vert\ i_1>0,\ldots ,i_r>0, i_1+\cdots +i_r=j\}.
$$ 
when $j>0$ and $T_{j}(I):= - \infty$ for $j \leq 0$. 
 
\begin{theorem}
\label{t:L}
Let $L$ be a complex of $B$-modules and $i$ an integer such that the $L_j$ for $j \leq i$ have depth $n = \dim B$ and are bounded below. Suppose that $I<B$ is an ideal such that $\dim B/I =0$, $IH_j=0$ for $j \leq i$ and that $H_j=0$ for $j<<0$. Then,
\[
\fin H_i \leq \max _{j<i} \{ \fin H^n_{\mathfrak m} (L_j) + T_{i-j}(I) \} + t^B_n(B/I).
\]
\end{theorem}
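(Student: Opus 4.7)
The plan is to iterate the final statement of Proposition~\ref{p:Lind}, which under exactly the hypotheses needed here (depth $n$ on the $L_{i-k}$ and $\mathfrak m$-torsion on the relevant $H_j$'s) gives, for every $i' \leq i$,
\[
\fin H_{i'} \leq \max\Bigl(\{\fin H_{i'-k+1} + t^B_k(B/I)\}_{2\leq k\leq n},\ \fin H^n_{\mathfrak m}(L_{i'}) + t^B_n(B/I)\Bigr).
\]
First I would verify that both hypotheses are preserved under iteration: since $\depth L_j = n$ for all $j \leq i$ the condition $\depth L_{i'-k} \geq \min\{n+1-k,n\}$ holds at every stage, forcing $c^j_{i'}(L) = -\infty$ for $j<n$ and $c^n_{i'}(L) = \fin H^n_{\mathfrak m}(L_{i'})$; and because $\dim B/I = 0$ the ideal $I$ is $\mathfrak m$-primary, so $IH_j = 0$ gives $H_j$ $\mathfrak m$-torsion for every $j \leq i$.

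Next I would unfold the iteration. Starting from $\fin H_i$ and repeatedly replacing each $\fin H_{i'}$ by the bound above, we build a tree of branches. Each application strictly decreases the homological index (by $k-1 \geq 1$), and since $H_j = 0$ for $j \ll 0$, every branch either drops to $-\infty$ (irrelevant to the max) or terminates at a term $\fin H^n_{\mathfrak m}(L_j) + t^B_n(B/I)$ for some $j \leq i$. A generic terminating branch uses choices $k_1,\dots,k_r \in \{2,\dots,n\}$ with $i-j = \sum_{\ell=1}^{r}(k_\ell-1)$, contributing
\[
\fin H^n_{\mathfrak m}(L_j) + t^B_n(B/I) + \sum_{\ell=1}^{r} t^B_{k_\ell}(B/I).
\]

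The last ingredient is to recognise the $B/I$-Betti sum as controlled by $T_{i-j}(I)$. From the short exact sequence $0 \to I \to B \to B/I \to 0$ one obtains $t^B_{k-1}(I) = t^B_k(B/I)$ for $k \geq 2$, so writing $\alpha_\ell := k_\ell - 1 \geq 1$ with $\sum\alpha_\ell = i-j$ gives $\sum_\ell t^B_{k_\ell}(B/I) = \sum_\ell t^B_{\alpha_\ell}(I) \leq T_{i-j}(I)$. Taking the max over all branches yields the stated bound.

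The main obstacle I foresee is the bookkeeping: making sure the iteration really does produce every partition used in the definition of $T_{i-j}(I)$ (and no spurious ones), and cleanly handling the boundary cases where a branch terminates immediately at $\fin H^n_{\mathfrak m}(L_i)$ or descends into the region where $H_j = 0$, both of which should be absorbed by the convention $T_j(I) = -\infty$ for $j \leq 0$ together with the $H_j = 0$ vanishing hypothesis.
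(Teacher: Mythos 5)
Your argument is essentially the paper's: the official proof is precisely an induction on $i$ using the final display of Proposition~\ref{p:Lind}, together with the inequality $T_{j+k}(I)\geq T_j(I)+t^B_{k+1}(B/I)$, which is the same ingredient as your identification $t^B_{k-1}(I)=t^B_k(B/I)$ for $k\geq 2$; your ``tree of branches'' is just that induction unrolled, and the bookkeeping worry is harmless because each branch's Betti sum only needs to be dominated by $T_{i-j}(I)$, not to realise every partition.

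The one point that does not work as written is the branch with $r=0$. The convention $T_j(I)=-\infty$ for $j\leq 0$ does not absorb it: that branch contributes $\fin H^n_{\mathfrak m}(L_i)+t^B_n(B/I)$, which is not majorised by $\max_{j<i}\{\fin H^n_{\mathfrak m}(L_j)+T_{i-j}(I)\}+t^B_n(B/I)$. What your iteration (and equally the paper's induction) actually proves is
\[
\fin H_i \leq \max\bigl\{\max_{j<i}\{\fin H^n_{\mathfrak m}(L_j)+T_{i-j}(I)\},\ \fin H^n_{\mathfrak m}(L_i)\bigr\}+t^B_n(B/I),
\]
i.e.\ the stated bound with $T_0(I)$ read as $0$ and $j\leq i$ allowed. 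The extra term cannot be discarded in general: take $B=k[x]$, $I=(x^N)$ and the complex $0\to B\xrightarrow{\,x^N\,}B\to 0$ placed in homological degrees $i+1,i$; all hypotheses of the theorem hold, $\fin H_i=N-1$, yet the maximum over $j<i$ is $-\infty$. It is also exactly the term needed to deduce the $i=0$ case of Corollary~\ref{c:koszul} from the theorem. So this is a slip in the statement's convention rather than a flaw in your strategy; you should simply replace the claim that the immediate-termination branch is ``absorbed'' by the $-\infty$ convention with the observation that it supplies the $j=i$ term (with $T_0$ taken to be $0$). The other boundary case---branches descending into the range where $H_j=0$---is indeed handled by the vanishing hypothesis, as you say.
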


\begin{proof}
This is a straightforward induction on $i$ using Proposition~\ref{p:Lind}. We start at some $i$ such that $H_j=0$ for $j \leq i$ and use the fact that, by design, $T_{j+k}(I) \geq T_j(I)+t^B_{k+1}(B/I)$.
\end{proof}

\begin{lemma}
\label{l:overb}
Suppose that $B$ is a standard graded polynomial ring and $I<B$. Then
\begin{enumerate}
\item
$T_i(I) \leq (\fin B/I +2)i$ and
\item
if $t^B_1(I) \leq \fin B/I +1$ then $T_i(I) \leq (\fin B/I +1)i$. 
\end{enumerate}
\end{lemma}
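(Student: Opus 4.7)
The plan is to reduce both parts to the single estimate $t^B_j(I) \leq Cj$ for $j \geq 1$, with $C = \fin(B/I)+2$ in part (1) and $C = \fin(B/I)+1$ in part (2). Granted such an estimate, for any partition $i = i_1 + \cdots + i_r$ with $i_l \geq 1$ one has $\sum_l t^B_{i_l}(I) \leq C \sum_l i_l = Ci$, whence $T_i(I) \leq Ci$, which is exactly the desired bound.

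Write $e := \fin(B/I)$. When $e = +\infty$ (equivalently $\dim B/I \geq 1$), both conclusions are trivial, so assume $B/I$ has finite length. Then all higher local cohomology of $B/I$ vanishes and $\reg(B/I) = \fin(B/I) = e$. From the short exact sequence $0 \to I \to B \to B/I \to 0$, together with $\Tor^B_j(B,k) = 0$ for $j \geq 1$, one reads off $t^B_j(I) = t^B_{j+1}(B/I)$ for $j \geq 0$; combined with $\reg(B/I) = e$ this gives the key estimate
\[
t^B_j(I) \leq e + 1 + j \quad \text{for all } j \geq 0.
\]

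For part (1), the elementary inequality $e + 1 + j \leq (e+2) j$, valid for $j \geq 1$, finishes the argument with $C = e+2$. For part (2), the hypothesis supplies the case $j = 1$ directly, namely $t^B_1(I) \leq e + 1 = (e+1) \cdot 1$, and for $j \geq 2$ the inequality $e + 1 + j \leq (e+1) j$ rearranges to $e(j-1) \geq 1$, which is satisfied whenever $e \geq 1$. The degenerate case $e = 0$ forces $B/I = k$ and hence $I = \mathfrak m$; since $t^B_1(\mathfrak m) = 2$ in any polynomial ring of Krull dimension at least two, the hypothesis $t^B_1(I) \leq 1$ then forces $\dim B = 1$, making $I$ principal and the bound vacuous.

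I do not expect a genuine obstacle: the argument is essentially bookkeeping, with the only analytic input being the standard identity $\reg(B/I) = \fin(B/I)$ for finite length modules and the ensuing bound $\reg I \leq \reg(B/I)+1$ from the short exact sequence. The only mildly delicate point is isolating the degenerate values of $e$ so that the uniform estimate $t^B_j(I) \leq Cj$ actually holds for all relevant $j$.
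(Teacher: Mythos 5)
Your proof is correct and takes essentially the same route as the paper: the whole content is the estimate $t^B_j(I)\leq \fin (B/I)+1+j$, which the paper reads off directly from the Koszul resolution of $k$ over $B$ (you instead reduce to the finite-length case and use $\reg (B/I)=\fin (B/I)$), after which both arguments are the same bookkeeping over partitions. Your explicit treatment of the degenerate case $\fin (B/I)=0$ in part (2) --- where the hypothesis forces $I=\mathfrak m$ with $\dim B\leq 1$, hence $T_i(I)=-\infty$ --- is a corner the paper's ``easy induction'' leaves implicit, and you handle it correctly (only note that $\dim B=0$ is also possible, but then $I=0$ and the bound is again vacuous).
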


\begin{proof}
Calculating $\Tor^B_i(B/I,k)$ using the standard Koszul complex for $B$ shows that $t^B_i(B/I) \leq i + \fin B/I$, so $t^B_i(I) \leq i + 1 + \fin B/I$. The result now follows from an easy induction on $i$.
\end{proof}

\begin{corollary}
\label{c:koszul}

Let $B$ be a standard graded polynomial ring and $I<B$ an ideal such that $\fin B/I < \infty$. Let $f_1, \ldots , f_m$ be a set of generators for $I$ that are minimal by degree and let $K(f;B)$ be the Koszul complex on the $f$. Then
\[
\fin H_i(f;B) \leq (\fin B/I +2)(i+1)-2.
\]
If $t^B_1(I) \leq \fin B/I +1$ then
\[
\fin H_i(f;B) \leq (\fin B/I +1)(i+1)-1.
\]
\end{corollary}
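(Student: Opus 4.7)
The plan is to apply Theorem~\ref{t:L} to the Koszul complex $L = K(f;B)$ and optimise the resulting bound. Set $e := \fin(B/I)$, which is finite by assumption; hence $B/I$ is Artinian and $\dim B/I = 0$. Every term $L_j = \bigoplus_{k_1 < \cdots < k_j} B(-d_{k_1} - \cdots - d_{k_j})$, with $d_\ell := \deg f_\ell$, is free of depth $n := \dim B$ and bounded below. Since Koszul homology is always annihilated by the ideal generated by the $f_\ell$ and vanishes in negative degrees, the hypotheses of Theorem~\ref{t:L} are met for every $i \in \mathbb Z$.

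I next estimate each ingredient on the right-hand side of Theorem~\ref{t:L}. Because $f_1, \ldots, f_m$ is minimal by degree, $d_k \leq t^B_0(I) = t^B_1(B/I) \leq e+1$, exactly as in the proof of Lemma~\ref{l:sg}. Therefore every basis element of $L_j$ sits in degree at most $j(e+1)$, so $\fin H^n_\mathfrak m(L_j) \leq j(e+1) - n$ (using $\fin H^n_\mathfrak m(B) = -n$). Lemma~\ref{l:overb}(1) gives $T_{i-j}(I) \leq (e+2)(i-j)$, and computing $\Tor^B_n(B/I,k)$ from the Koszul complex on the variables of $B$ yields $t^B_n(B/I) \leq n + e$.

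Feeding these bounds into Theorem~\ref{t:L}, for $i \geq 1$ I obtain
\[
\fin H_i(f;B) \;\leq\; \max_{0 \leq j < i}\bigl\{j(e+1) - n + (e+2)(i-j)\bigr\} + (n+e) \;=\; \max_{0 \leq j < i}\bigl\{(e+2)i - j\bigr\} + e,
\]
which is attained at $j = 0$ and yields $(e+2)i + e = (e+2)(i+1) - 2$. The case $i = 0$ is immediate since $H_0(f;B) = B/I$. Under the stronger hypothesis $t^B_1(I) \leq e+1$, I substitute the sharper estimate $T_{i-j}(I) \leq (e+1)(i-j)$ from Lemma~\ref{l:overb}(2), and the same computation gives $(e+1)i + e = (e+1)(i+1) - 1$. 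The only point needing any care is the observation that the maximum over $j$ is attained at $j = 0$: the coefficient of $j$ in the bracketed expression is $-1$, so the largest value occurs at the smallest admissible index, and $j = 0$ is admissible since $L_0 = B$ is the leftmost term of the Koszul complex.
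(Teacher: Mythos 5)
Your proposal is correct and follows essentially the same route as the paper: apply Theorem~\ref{t:L} to the Koszul complex, bound the generator degrees by $\fin B/I+1$ as in Lemma~\ref{l:sg}, and control $T_{i-j}(I)$ and $t^B_n(B/I)$ via Lemma~\ref{l:overb}. You merely spell out the arithmetic (and the $i=0$ case) that the paper leaves implicit, so there is nothing substantive to add.
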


\begin{proof}
Let $d$ be the maximum of the degrees of the $f_i$; Lemma~\ref{l:sg} shows that $d \leq \fin B/I +1$.

Now $\fin H^n_I(K_i) \leq di -n \leq (\fin B/I +1) -n$ for $0 \leq i \leq m$ and is $-\infty$ otherwise. The result follows from Theorem~\ref{t:L} and Lemma~\ref{l:overb}.
\end{proof}

\begin{remark}
The first part of this corollary appears in \cite[Proposition 3.3]{bcr2}, with an elementary proof.
\end{remark}

In the context of Derksen's conjecture, we can calculate $\Tor^S_i(B,k)$ by using the  Koszul complex $K(x;S)$ to resolve $k$ over $S$, then tensoring with $B$ to obtain $K(f;B)$. Thus $t^S_i(B) = \fin H_i(f;B)$, and Theorem~\ref{t:main} follows from Corollary~\ref{c:koszul}.

\section{The Change of Rings Spectral Sequence} 
\label{s:ss}

Let $f \! : A \rightarrow B $ be a homomorphism of $k$-algebras and let $I=f(A_+)B<B$.

The change of rings spectral sequence $E^2_{p,q}=\tor_p^B(\tor_q^A(B,k),k)\Rightarrow \tor_{p+q}^A(k,k)$ has the following form.
$$
\footnotesize
\xymatrix{
k \otimes _B \tor_2^A(B,k)&&&\\
k \otimes _B \tor_1^A(B,k)&\tor_1^B(\tor_1^A(B,k),k)&\tor_2^B(\tor_1^A(B,k),k)\ar[ull]&&\\
B \otimes _A k&\tor_1^B(B \otimes _A k,k)&\tor_2^B(B \otimes _A k,k)\ar[ull]&\tor_3^B(B \otimes _A k,k)\ar[ull]\ar@{-->}[uulll]& \tor_4^B(B \otimes _A k,k)\ar[ull]\\}
$$
Filtering the $B$ in $\tor_q^A(B,k)$ by degree, we see that $\fin \tor_p^B(\tor_q^A(B,k),k)\leq \fin \tor_q^A(B,k)+ \fin \tor_p^B(k,k)$.

The end of an entry on the $E_2$ page is bounded by the largest of the end of $H_i(\Tot)$ corresponding to its diagonal and the ends of the $E_2$ entries that are linked to it by a differential on some page. Applying this to the bottom row yields
\begin{equation}
\label{e:ss1}
\fin \Tor ^B_i(B \otimes _A k,k) \leq \max \{ \{ t^B_j(k) + t^A_{i-j-1}(B) \}_{0 \leq j \leq i-2},  t^A_i(k) \}.
\end{equation}
From the first column we obtain
\begin{equation}
\label{e:ss2}
\fin (k \otimes _B \Tor ^A_i(B,k)) \leq \max \{ \{ t^A_j(B) + t^B_{i-j+1}(k) \}_{0 \leq j \leq i-1},  t^A_i(k) \}.
\end{equation}
Notice that $\Tor^A_i(B,k)$ is naturally a $B/I$-module and as such is generated in degrees at most $\fin (k \otimes _B \Tor ^A_i (B,k))$. Thus
\begin{equation}
\label{e:ss3}
\fin \Tor^A_i (B,k) \leq \fin (k \otimes _B \Tor ^A_i(B,k)) + \fin B/I.
\end{equation}
Set
$$
U_{j}(f):=\max\{ (t^B_{i_1}(B_+) + \fin B/I)+\cdots +(t^B_{i_r}(B_+) + \fin B/I) \ \vert \ i_1>0,\ldots ,i_r>0, i_1+\cdots +i_r=j \}
$$ 
when $j>0$ and $U_{j}(f):= - \infty$ for $j \leq 0$. 

\begin{proposition}
\label{p:ssbound}
We have
\[
t^A_i(B) \leq \max \{ U_i(f), \enskip \{ t^A_j(k) + (i-j) \fin B/I \}_{0 \leq j \leq i} \} + \fin B/I.
\]
\end{proposition}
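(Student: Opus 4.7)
The plan is to induct on $i$. The base case $i = 0$ is immediate, since $t^A_0(B) = \fin(B \otimes_A k) = \fin B/I$, which matches the right-hand side on taking $l = 0$ in the second component of the inner maximum (noting $U_0(f) = -\infty$ and $t^A_0(k) = 0$).

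For the inductive step, combine (\ref{e:ss2}) with (\ref{e:ss3}), using $t^B_{i-j+1}(k) = t^B_{i-j}(B_+)$ (valid for $i - j \geq 1$), to obtain the recursion
\[
t^A_i(B) \leq \max\bigl\{ \{ t^A_j(B) + t^B_{i-j}(B_+) \}_{0 \leq j \leq i-1},\; t^A_i(k) \bigr\} + \fin B/I.
\]
Set $\Phi(i) := \max\{U_i(f),\, \{t^A_l(k) + (i-l)\fin B/I\}_{0 \leq l \leq i}\}$, so the desired bound reads $t^A_i(B) \leq \Phi(i) + \fin B/I$. The $t^A_i(k)$ summand of the recursion is dominated by $\Phi(i)$ (take $l = i$); substituting the inductive bound $t^A_j(B) \leq \Phi(j) + \fin B/I$ for each $j < i$ reduces matters to checking $\Phi(j) + t^B_{i-j}(B_+) + \fin B/I \leq \Phi(i)$. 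When the max in $\Phi(j)$ is realised by $U_j(f)$, witnessed by a decomposition $m_1 + \cdots + m_s = j$, appending the extra part $i - j$ produces a legal decomposition of $i$; the sum gains precisely one summand $t^B_{i-j}(B_+) + \fin B/I$ and so remains bounded by $U_i(f)$.

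The harder case is when the max in $\Phi(j)$ is realised by a term $t^A_l(k) + (j-l)\fin B/I$ with $l < j$: the resulting contribution $t^A_l(k) + (j-l+1)\fin B/I + t^B_{i-j}(B_+)$ does not fit directly inside $\Phi(i)$, because the naive comparison with $t^A_l(k) + (i-l)\fin B/I$ would require $t^B_{i-j}(B_+) \leq (i-j-1)\fin B/I$, which need not hold. I would sidestep this obstacle by sharpening the estimate $\fin \Tor^B_p(\Tor^A_q(B,k),k) \leq t^A_q(B) + t^B_p(k)$ that underlies (\ref{e:ss2}), exploiting the fact that $N := \Tor^A_q(B,k)$ is naturally a $B/I$-module: the change-of-rings spectral sequence $\Tor^{B/I}_a(N, \Tor^B_b(B/I,k)) \Rightarrow \Tor^B_{a+b}(N,k)$ yields $\fin \Tor^B_p(N,k) \leq \fin(k \otimes_B N) + t^B_p(B/I)$, while the long exact sequence on $\Tor^B(-,k)$ associated to $0 \to I \to B \to B/I \to 0$, together with filtering $(B/I)_+$ by degree, gives $t^B_p(B/I) \leq t^B_{p-1}(B_+) + \fin B/I$ for $p \geq 2$. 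Re-running the first-column argument for $\Tor^B_p(\Tor^A_q(B,k),k) \Rightarrow \Tor^A_{p+q}(k,k)$ with this refined estimate converts each "step" of the recursion to a $t^B_{i-j+1}(B/I)$-weighted one; iterating should reproduce $\Phi(i) + \fin B/I$ on the nose, with the $U_i(f)$ summand of $\Phi(i)$ absorbing the "homological" steps and the $t^A_l(k) + (i-l)\fin B/I$ summands absorbing "stopping" steps. The main technical difficulty is the bookkeeping that correctly partitions the accumulated cost between these two kinds of contributions.
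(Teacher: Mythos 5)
Your skeleton is the paper's: combine (\ref{e:ss2}) and (\ref{e:ss3}) into a recursion, check the base case, and absorb the $U_j(f)$ branch of the inductive hypothesis using the superadditivity $U_{j+k}(f)\geq U_j(f)+t^B_{k+1}(k)+\fin B/I$ — that is exactly the induction the paper runs, and the case you label ``harder'' is the one the paper's one-line proof leaves implicit. The trouble is that your repair of that case does not work. The ``sharpened estimate'' $\fin \Tor^B_p(N,k)\leq \fin(k\otimes_B N)+t^B_p(B/I)$ is false: in the change-of-rings spectral sequence $\Tor^{B/I}_a(N,\Tor^B_b(B/I,k))\Rightarrow \Tor^B_{a+b}(N,k)$ the terms with $a>0$ cannot be discarded. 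For instance, with $B=k[x,y]$, $I=(x)$, $N=k$ and $p=2$ one has $\fin\Tor^B_2(k,k)=2$ while $\fin(k\otimes_B N)+t^B_2(B/I)=0+(-\infty)$. The correct form of this refinement is precisely the paper's Lemma~\ref{l:tor2}, which necessarily retains the extra term $\fin B/I+t^B_{p-1}(k)$; the paper deploys it only later, in Corollary~\ref{c:ssboundplus}, and only under the additional hypothesis $t^B_1(I)\leq \fin B/I+1$, because without some such hypothesis $t^B_p(B/I)$ is not controlled by multiples of $\fin B/I$.

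Moreover, even if your estimate were taken at face value, combining it with your own inequality $t^B_p(B/I)\leq t^B_{p-1}(B_+)+\fin B/I$ returns a per-step cost of $\fin(k\otimes_B N)+\fin B/I+t^B_{p-1}(B_+)$, which is exactly the step cost of the unrefined estimate $\fin N+t^B_p(k)$ you were trying to improve (recall $t^B_p(k)=t^B_{p-1}(B_+)$ and $\fin N\leq \fin(k\otimes_B N)+\fin B/I$); so the obstruction you identified is untouched by the proposed ``re-run''. Finally, the decisive part — that iterating the refined recursion ``reproduces $\Phi(i)+\fin B/I$ on the nose'' — is not argued at all; you explicitly defer the bookkeeping, and with the corrected Lemma~\ref{l:tor2}-type estimate it is not evident that the $t^B_{i-j+1}(B/I)$ and $\fin B/I+t^B_{i-j}(k)$ contributions are absorbed by $\Phi(i)$ in the stated generality. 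In short: you reproduce the paper's induction for the easy branch, but the remaining branch is handled by an appeal to a false lemma plus an unexecuted computation, so the proposal is not a proof of the proposition.
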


\begin{proof}
By design, $U_{j+k}(f) \geq U_{j}(f) + t^B_{k+1}(k) + \fin B/I$. The proof is now by induction on $i$, using inequalities \ref{e:ss2} and \ref{e:ss3}.
\end{proof}

If $B$ is a polynomial ring with generators in degrees $e_1 \geq e_2 \geq \cdots \geq e_n$, then $U_{i}(f)=(e_1+e_2 + \fin B/I)i$ for $n\geq 2$ and is $-\infty$ otherwise. For convenience, we set $e_2= -\infty$ when $n \leq 1$. When $B$ is standard graded and $n \geq 2$ this becomes $U_{i}(f)=(2 + \fin B/I)i$.

If $A$ is a polynomial ring with generators in degrees $d_1 \geq d_2 \geq \cdots \geq d_m$, we set $\overline{d}_j= \max \{ d_j, \enskip \fin B/I \}$ for $j \leq m$, $\overline{d}_j= \fin B/I$ for $j > m$. Then $\max_{0 \leq j \leq i} \{ t^A_j(k) + (i-j) \fin B/I \} =  \overline{d}_1+ \cdots + \overline{d}_i$.

By Lemma~\ref{l:sg}, there is always a subset of the generators of $A$ that still generates $I$ and satisfies $\overline{d}_j \leq \fin B/I +1$ for all $j$. We call such a set small.

\begin{corollary}
\label{c:ssbound}
When both $A$ and $B$ are polynomial rings,
\[
t^A_i(B) \leq \{ (e_1+e_2+ \fin B/I)i, \enskip \overline{d}_1+ \cdots + \overline{d}_i \} + \fin B/I.
\]
If $B$ is standard graded and the generators of $A$ map to a small set of generators for $I$, then
\[
t^A_i(B) \leq (\fin B/I +2)(i+1)-2.
\]
\end{corollary}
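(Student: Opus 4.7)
The plan is to derive both inequalities directly from Proposition~\ref{p:ssbound} by substituting the explicit forms of $U_i(f)$ and of $\max_{0 \leq j \leq i} \{ t^A_j(k) + (i-j)\fin B/I \}$ computed in the two paragraphs immediately preceding the corollary.

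For the first inequality I would use that when $B$ is polynomial with generators in degrees $e_1 \geq \cdots \geq e_n$ one has $U_i(f) = (e_1 + e_2 + \fin B/I)i$, and that when $A$ is polynomial with generators in degrees $d_1 \geq \cdots \geq d_m$ one has $\max_{0 \leq j \leq i} \{ t^A_j(k) + (i-j)\fin B/I \} = \overline{d}_1 + \cdots + \overline{d}_i$. Feeding these into the bound of Proposition~\ref{p:ssbound} gives the first inequality verbatim; the degenerate cases $n \leq 1$ and $j > m$ are absorbed by the conventions $e_2 = -\infty$ and $d_j = -\infty$.

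For the second inequality I would specialize to the case where $B$ is standard graded, so $e_1 = e_2 = 1$ and $U_i(f) = (\fin B/I + 2)i$. By definition of a small set of generators, $\overline{d}_j \leq \fin B/I + 1$ for every $j$, and hence $\overline{d}_1 + \cdots + \overline{d}_i \leq (\fin B/I + 1)i \leq U_i(f)$. Therefore the maximum appearing in the first inequality is realized by $U_i(f)$, and adding $\fin B/I$ rewrites as $(\fin B/I + 2)i + \fin B/I = (\fin B/I + 2)(i+1) - 2$, as required.

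The only real work lies in the two auxiliary identities asserted in the excerpt. The formula for $U_i(f)$ is a partition argument showing that the partition $i = 1 + 1 + \cdots + 1$ is optimal: merging two parts of sizes $a \leq b$ replaces $(e_1 + \cdots + e_{a+1}) + (e_1 + \cdots + e_{b+1}) + 2\fin B/I$ by $e_1 + \cdots + e_{a+b+1} + \fin B/I$, and the pairwise comparison $e_1 \geq e_{b+2},\ldots,e_a \geq e_{a+b+1}$ together with the unmatched $e_{a+1}$ and $\fin B/I$ shows that merging cannot increase the total. The formula for the maximum over $j$ is a threshold computation whose optimum is attained at the largest index with $d_j \geq \fin B/I$. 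Neither step is deep, and neither presents a serious obstacle.
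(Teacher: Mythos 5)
Your proposal is correct and follows exactly the paper's route: the paper's proof is simply ``immediate from the preceding remarks,'' i.e.\ substituting the stated formulas for $U_i(f)$ and $\max_{0\le j\le i}\{t^A_j(k)+(i-j)\fin B/I\}$ into Proposition~\ref{p:ssbound}, which is what you do. Your additional verification of the two auxiliary identities (the merging argument for $U_i(f)$ and the threshold computation for $\overline{d}_1+\cdots+\overline{d}_i$) is sound and merely fills in what the paper asserts without proof.
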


\begin{proof} 
This is immediate from the preceding remarks.
\end{proof}

Let $C= f(A) \subseteq B$ . Suppose that the inclusion $C \hookrightarrow B$ is split as a map of $C$-modules. Then the bounds that we have obtained for $t^A(B)$ are also valid for $t^A(C)$. The first part of Theorem~\ref{t:main} follows.

\begin{lemma}
\label{l:tor2}
Let $X$ be a $B/I$-module that is bounded below. Then 
\[
t^B_i(X) \leq t^B_0(X) + \max \{ t^B_i(B/I), \enskip \fin B/I + t^B_{i-1}(k) \}.
\]
\end{lemma}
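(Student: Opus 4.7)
The plan is to reduce the estimate to one short exact sequence together with the universal bound $t^B_j(M)\le \fin M+t^B_j(k)$; no spectral sequence is required. Set $d:=t^B_0(X)$ and choose a minimal set of $B/I$-generators of $X$, living in degrees at most $d$. This produces a surjection $G\twoheadrightarrow X$ with $G=\bigoplus_j (B/I)(-\alpha_j)$ and $\max_j\alpha_j=d$; writing $L$ for its kernel yields a short exact sequence $0\to L\to G\to X\to 0$ of $B/I$-modules. The whole point of using a $B/I$-cover rather than a $B$-cover is that $L\subseteq G$, so $\fin L\le \fin G = d+\fin(B/I)$.

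For $i\ge 1$, the long exact sequence for $\Tor^B(-,k)$ gives $t^B_i(X)\le \max\{t^B_i(G),\ t^B_{i-1}(L)\}$. Since $G$ is a sum of shifts of $B/I$ by amounts at most $d$, one term equals $t^B_i(B/I)+d$, which is exactly the first entry of the claimed maximum. For the other, I would invoke the elementary bound $t^B_j(M)\le \fin M+t^B_j(k)$, valid for any graded $B$-module $M$, obtained by computing $\Tor^B_j(M,k)$ via a minimal $B$-free resolution $P_\bullet$ of $k$ and noting that it is a subquotient of $M\otimes_B P_j$, whose end is $\fin M+t^B_j(k)$. Applied to $M=L$ at step $j=i-1$ this yields
\[
t^B_{i-1}(L)\le \fin L+t^B_{i-1}(k)\le d+\fin(B/I)+t^B_{i-1}(k),
\]
which is the second entry of the claimed maximum. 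Assembling the two estimates proves the lemma for $i\ge 1$, and $i=0$ is tautological.

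The only step that requires any thought is the choice of a $B/I$-free rather than a $B$-free cover of $X$: the former is what bounds $\fin L$ by $d+\fin(B/I)$ and makes the two branches of the long exact sequence line up precisely with the two quantities appearing in the statement. With a $B$-free cover the kernel would no longer sit inside a shift of $B/I$ and the resulting estimate would be too coarse.
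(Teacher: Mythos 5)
Your argument is correct and is essentially the paper's own proof: both take a minimal $B/I$-free cover $F\twoheadrightarrow X$ with kernel inside $F$, so that $\fin(\mathrm{kernel})\le t^B_0(X)+\fin B/I$, and then combine the long exact sequence for $\Tor^B(-,k)$ with the elementary bound $\fin\Tor^B_j(M,k)\le \fin M + t^B_j(k)$ coming from the minimal resolution of $k$. Your closing remark about why a $B/I$-cover (rather than a $B$-cover) is the right choice matches the mechanism the paper relies on.
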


\begin{proof}
Express $X$ as the quotient of a minimal free $B/I$-module $F$ and let $Y$ be the kernel, so we have a short exact sequence $0 \rightarrow Y \rightarrow F \rightarrow X \rightarrow0$ and $t^B_0(F) \cong t^B_0(X)$. We also have $\fin Y \leq \fin F = t^B_0(F) + \fin B/I$.

Part of the long exact sequence for $\Tor^B(-,k)$ is
\[
\cdots \rightarrow \Tor^B_i(B/I,k) \rightarrow \Tor^B_i(X,k) \rightarrow \Tor^B_{i-1}(Y,k) \rightarrow \cdots .
\] 
We also know that $\fin \Tor^B_i(F,k) = t^B_0(F) + t^B_i(B/I)$ and $\fin \Tor^B_{i-1}(Y,k) \leq t^B_0(Y) + t^B_{i-1}(k)$. Putting all this together, we obtain the result.
\end{proof}

\begin{corollary}
\label{c:ssboundplus}
Let $A$ and $B$ be polynomial rings such that $B$ is standard graded and the generators of $A$ map to a small set of generators for $I$. If $t^B_1(I) \leq \fin B/I+1$, then
\[
t^A_i(B) \leq (\fin B/I +1)(i+1)-1.
\]
\end{corollary}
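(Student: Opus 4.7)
The plan is to rerun the inductive argument of Proposition~\ref{p:ssbound}, sharpening the estimate of the page-two entries of the change-of-rings spectral sequence by applying Lemma~\ref{l:tor2} in place of the filter-by-degree bound. Write $a := \fin B/I$ and set $\gamma_j := \fin(k \otimes_B \Tor^A_j(B,k))$; inequality~(\ref{e:ss3}) reads $t^A_j(B) \leq \gamma_j + a$, so it suffices to show by induction on $i$ that $\gamma_i \leq (a+1) i$, whence the corollary follows immediately with one unit to spare.

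For the inductive step, the first-column analysis behind (\ref{e:ss2}) provides
\[
\gamma_i \leq \max\bigl\{\, t^A_i(k),\ \{\fin \Tor^B_r(\Tor^A_{i-r+1}(B,k), k)\}_{r \geq 2} \,\bigr\}.
\]
Since $\Tor^A_{i-r+1}(B,k)$ is a bounded-below $B/I$-module, Lemma~\ref{l:tor2} yields
\[
\fin \Tor^B_r(\Tor^A_{i-r+1}(B,k), k) \leq \gamma_{i-r+1} + \max\{t^B_r(B/I),\ a + t^B_{r-1}(k)\}.
\]
The exact sequence $0 \to I \to B \to B/I \to 0$ gives $t^B_2(B/I) = t^B_1(I) \leq a+1$ by hypothesis, so the bracket equals $a + 1$ at $r = 2$. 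For $r \geq 3$, the regularity bound $t^B_r(B/I) \leq a + r$ (valid since $\reg B/I = a$ for $B/I$ of finite length) makes the bracket equal to $a + r$.

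The numerical inequalities then close the induction: smallness of the generators of $A$ gives $t^A_i(k) \leq (a+1) i$; the $r = 2$ term contributes $\gamma_{i-1} + (a+1) \leq (a+1)(i-1) + (a+1) = (a+1) i$, exactly on target; and for $r \geq 3$,
\[
\gamma_{i-r+1} + a + r \leq (a+1)(i-r+1) + a + r = (a+1) i - a(r-2) + 1 \leq (a+1) i,
\]
the last inequality holding whenever $a \geq 1$. The only degenerate case $a = 0$ forces $\dim B = 1$ under the hypothesis (in a standard graded polynomial ring of dimension $\geq 2$, $I = \m$ satisfies $t^B_1(\m) = 2 > 1$), and then $B$ is free over $A$, so the claim is immediate. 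The main delicacy is exactly this saving of one unit of degree at the $r = 2$ differential: the filter-by-degree estimate combined with (\ref{e:ss3}) gives only $t^A_{i-1}(B) + 2 \leq \gamma_{i-1} + a + 2$, which would miss the target by one.
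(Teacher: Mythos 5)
This is essentially the paper's own proof: the same induction on $\gamma_i:=\fin(k\otimes_B\Tor^A_i(B,k))\le(\fin B/I+1)i$, concluded via (\ref{e:ss3}), with the one-degree saving obtained exactly as in the paper by feeding Lemma~\ref{l:tor2} and $t^B_2(B/I)=t^B_1(I)\le\fin B/I+1$ into the source of the $d_2$-differential; your uniform use of Lemma~\ref{l:tor2} for $r\ge3$ gives the same numerical bounds as the paper's direct use of (\ref{e:ss2}) for those terms. The only inaccuracy is in your patch for the degenerate case $a:=\fin B/I=0$ (a case the paper passes over silently, even though its own estimate for the $r\ge3$ terms likewise needs $a\ge1$): the hypothesis does force $\dim B\le1$, but $B$ need not be free over $A$, since $A$ may have redundant generators mapping to proportional linear forms. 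In that situation, however, after a linear change of variables $B\cong A/(\text{a subset of the variables})$, so $\Tor^A_i(B,k)$ is concentrated in degree $i$ and $t^A_i(B)\le i=(a+1)(i+1)-1$; hence the conclusion still holds there and your overall argument is sound.
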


\begin{proof}
In view of inequality \ref{e:ss3}, it is sufficient to prove that $t^A_0( \Tor^A_i(B,k)) \leq (\fin B/I + 1)i$. We do this by induction on $i$.

Using Lemma~\ref{l:tor2} with $X=\Tor^A_{i-1}(B,k)$, we obtain $t^B_2(\Tor^A_{i-1}(B,k) )\leq t^A_0( \Tor^A_{i-1}(B,k)) + \max \{ t^B_2(B/I), \enskip t^B_1(k) + \fin B/I \}$. But $t^A_0(\Tor^A_{i-1}(B,k)) \leq (\fin B/I +1)i$, by induction, $t^B_2(B/I)=t^B_1(I) \leq \fin B/I +1$ and $t^B_1(k)=1$. Thus $t^B_2(\Tor^A_{i-1}(B,k)) \leq (\fin B/I +1)i$.

We can use this estimate instead of $t^B_2(k) + t^A_{i-1}(B)$ in inequality \ref{e:ss2}. For the other terms we have $t^A_j(B) \leq (\fin B/I +1)(j+1)-1$, by induction, and $t^B_{i-j+1}(k) =i-j+1$. This leads to the required result.
\end{proof}

The second part of Theorem~\ref{t:main} follows. 

We can also obtain bounds on $t^C(k)$.

\begin{theorem}
Suppose that $B$ is a standard graded polynomial ring, $C$ is a subring and the inclusion is split over $C$. Let $I:= C_+B$.  Then $t^C_0(k)=0$, $t^C_1(k) \leq \fin B/I+1$ and $t^C_i(k) \leq (\fin B/I +2)i-2$ for $i \geq 2$.

If $t^B_1(I) \leq \fin B/I +1$, then $t^C_i(k) \leq (\fin B/I +1)i-1$ for $i \geq 2$.
\end{theorem}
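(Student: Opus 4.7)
The plan is to set up a change of rings spectral sequence for a surjection $S \twoheadrightarrow C$ from a polynomial ring, just as in this section. I would let $f_1,\ldots,f_r$ be a minimal set of generators of $C$ and put $S=k[x_1,\ldots,x_r]$ with $\deg x_i=\deg f_i$, and write $d:=\fin B/I$. By Lemma~\ref{l:sg} I may take $\deg f_i\leq d+1$, and in the refined case $\deg f_i\leq d$ (assuming $\dim B\geq 2$; the cases $\dim B\leq 1$ are handled directly since then $C$ is polynomial). The abutment $t^S_i(k)$, read off the Koszul complex of $S$, is then bounded by $i(d+1)$, and by $id$ in the refined case.

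Next I would consider the spectral sequence
\[
E^2_{p,q}=\Tor^C_p\bigl(\Tor^S_q(C,k),k\bigr) \Rightarrow \Tor^S_{p+q}(k,k),
\]
whose bottom row is $E^2_{i,0}=\Tor^C_i(k,k)$, of end exactly $t^C_i(k)$. No differential reaches $E^r_{i,0}$ (the would-be source sits at $q<0$), while the outgoing differentials land at $E^r_{i-r,r-1}$ for $r\geq 2$. Exactly as in the discussion preceding inequality~(\ref{e:ss1}), this yields
\[
t^C_i(k) \leq \max\Bigl\{t^S_i(k),\ \{\fin E^2_{i-r,r-1}\}_{2\leq r\leq i}\Bigr\}.
\]
Filtering $C$ in $\Tor^S_{r-1}(C,k)$ by degree gives $\fin E^2_{i-r,r-1}\leq t^C_{i-r}(k)+t^S_{r-1}(C)$, and since $C$ is a $C$-module summand of $B$, $t^S_{r-1}(C)\leq t^S_{r-1}(B)$. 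Because the generators of $S$ form a small set of generators for $I$, Corollary~\ref{c:ssbound} gives $t^S_{r-1}(B)\leq (d+2)r-2$, while Corollary~\ref{c:ssboundplus} gives $t^S_{r-1}(B)\leq (d+1)r-1$ in the refined case.

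Substituting produces the recursion
\[
t^C_i(k)\leq \max\Bigl\{i(d+1),\ \bigl\{(d+2)r-2+t^C_{i-r}(k)\bigr\}_{2\leq r\leq i}\Bigr\}
\]
and its refined analogue. I would finish by induction on $i$, using the base cases $t^C_0(k)=0$ and $t^C_1(k)=\max_j\deg f_j\leq d+1$ (read straight off the first step of the minimal $C$-resolution of $k$), and checking that each term in the max is $\leq (d+2)i-2$ (respectively $\leq(d+1)i-1$) for $i\geq 2$, by arithmetic of the same flavour as in Lemma~\ref{l:overb}. I expect the main obstacle to be running the $(i,0)$-corner spectral sequence argument cleanly, and carefully invoking the splitting of $C\hookrightarrow B$ so as to inherit the polynomial-to-polynomial bounds of Corollaries~\ref{c:ssbound} and~\ref{c:ssboundplus} for $t^S_{r-1}(C)$; once in place, the induction is mechanical.
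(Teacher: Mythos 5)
Your proposal is correct and follows essentially the same route as the paper: build the polynomial ring on generators of $C$ bounded by Lemma~\ref{l:sg} (degree $\leq \fin B/I+1$, or $\leq \fin B/I$ in the refined case), run the corner argument of the change-of-rings spectral sequence (the paper's inequality~(\ref{e:ss1}) applied to $S\twoheadrightarrow C$), bound $t^S_{r-1}(C)\leq t^S_{r-1}(B)$ via the splitting and Corollaries~\ref{c:ssbound}/\ref{c:ssboundplus}, and close the induction. The only cosmetic difference is your explicit treatment of $\dim B\leq 1$ (where the paper simply notes the refined statement is vacuous), which is fine.
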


\begin{proof}
By Lemma~\ref{l:sg}, there is a set of generators for $C$ in degrees at most $\fin (B/I)+1$. Using these we form a polynomial ring $A$ that maps onto $C$ as in the statement of Derksen's Conjecture.

Since $k \cong C \otimes _A k$, we have $t^C_i(k) = \fin \Tor^C_i(C \otimes _A k,k)$. 

We can bound the latter by applying inequality \ref{e:ss1} to the map $A \twoheadrightarrow C$ to obtain $
\fin t^C_i(k) \leq \max \{ \{ t^C_j(k) + t^A_{i-j-1}(C) \}_{0 \leq j \leq i-2},  t^A_i(k) \}$.

But $t^A_i(k) \leq (\fin B/I + 1)i$, because we can calculate this using the Koszul complex and the degrees of the generators are bounded.
Corollary~\ref{c:ssbound} and the fact that $C \hookrightarrow $B is split yield $t^A_{i-j-1}(C) \leq (\fin B/I +2)(i-j)-2$ or $t^A_{i-j-1}(C) \leq (\fin B/I +1)(i-j)-1$.
The rest of the proof is a straightforward induction that is left to the reader.

When $t^B_1(I) \leq \fin B/I +1$, we can improve the bound on the degrees of the generators by 1 using the second part of Lemma~\ref{l:sg} (our result is vacuous for $\dim B \leq 1$).
Corollary~\ref{c:ssboundplus} yields
$t^A_{i-j-1}(C) \leq (\fin B/I +1)(i-j)-1$ and again we finish by induction.
\end{proof}

Theorem~\ref{t:R} follows.

\end{document}